\tikzstyle{vertex}=[circle,draw=black,fill=black,inner sep=0,minimum size=3pt,text=white,font=\footnotesize]
\newtheorem{theorem}{Theorem}
\newtheorem*{conjecture*}{Conjecture}
\newtheorem{lemma}{Lemma}
\newtheorem{corollary}{Corollary}[section]
\theoremstyle{remark}
\newtheorem{definition}{Definition}[section]
\newtheorem*{remark*}{Remark}
\newcommand{\vs}{\vspace{3mm}}
\newcommand{\Z}{\mathbb{Z}}
\newcommand{\N}{\mathbb{N}}
\title{On the Iterates of Digit Maps}
\author{Zachary Chase}
\address{Mathematical Institute, Andrew Wiles Building, Radcliffe Observatory Quarter, Woodstock Road, Oxford OX2 6GG, UK}
\email{zachary.chase@maths.ox.ac.uk}
\date{May 1, 2020}
\begin{document}

\begin{abstract}
Given a base $b$, a ``digit map" is a map $f: \mathbb{Z}^{\ge 0} \to \mathbb{Z}^{\ge 0}$ of the form $f(\sum_{i=0}^n a_ib^i) = \sum_{i=0}^n f_*(a_i)$, $0 \le a_i \le b-1$ for each $i$, where $f_* : \{0,1,\dots, b-1\} \to \mathbb{Z}^{\ge 0}$ satisfies $f_*(0) = 0$ and $f_*(1) = 1$. It has been proven for $b=10$ and $f_*(m) = m^2$, and various generalizations thereof, that there are arbitrarily long sequences of consecutive positive integers that end up at $1$ under repeated application of $f$. In this paper, we significantly generalize these results, providing a complete classification of digit maps for which, given any periodic point $n$, there are arbitrarily long sequences of consecutive positive integers that end up $n$.
\end{abstract}

\maketitle

\section{Introduction}

In this paper, we look at functions that take in a positive integer and output the sum of its values on the digits of that integer. Precisely, for a fixed base $b$, we start with a function $f_*: \{0,1,\dots,b-1\} \to \Z^{\ge 0}$ and then obtain a map $f: \Z^{\ge 0} \to \Z^{\ge 0}$ given by $f(\sum_{i=0}^n a_ib^i) = \sum_{i=0}^n f_*(a_i)$, where $0 \le a_i \le b-1$. We study long-term iterates of the map $f$; that is, we start with a positive integer $n$ and repeatedly apply $f$, to obtain the sequence $n,f(n),f(f(n)),f(f(f(n)))\dots$. 

\vs

In Richard Guy's book ``Unsolved Problems in Number Theory",  Guy poses many questions regarding $(2,10)$-happy numbers [3]. An \textit{$(e,b)$-happy number} is a number that, under iterates of the digit map $f$ induced by $f_*(m) = m^e$ in base $b$, eventually reaches $1$.  In [4], Pan proved that there exist arbitrarily long sequences of consecutive $(e,b)$-happy numbers assuming that if a prime $p$ divides $b-1$, then the integer $p-1$ does not divide $e-1$. 

\vs

A question appearing in Guy's book [3] is that of gaps in the happy number sequence. It is easy to see that, for any digit map, every positive integer eventually ends up in some finite cycle, i.e. a collection of positive integers $\{n_1,\dots,n_k\}$ such that $f(n_i) = n_{i+1}$ for $1 \le i \le k-1$ and $f(n_k) = f(n_1)$. For example, the cycles generated by the $(2,10)$-happy number digit map are $\{1\}$ and $\{4,16,37,58,89,145,42,20\}$. A gap in the happy number sequence, therefore, corresponds to consecutive numbers that end up in the latter cycle. In this paper, a special case of what we prove is that indeed for any $u$ in an $(e,b)$-happy number cycle, we can find arbitrarily long sequences of consecutive integers that end up in the same cycle as $u$. This answers the question of Guy.

\vs

Significantly more broadly, we provide a complete classification of digit maps for which there are arbitrarily long sequences of consecutive integers ending up in any prespecified cycle. To state our main theorem, we say that a digit map $f$ with base $b$ has a \textit{modular obstruction} if $\gcd\left(f_*(1)-1,\dots,f_*(b-1)-(b-1),b-1\right) > 1$. We call a positive integer $u$ in some cycle a \textit{cycle number} and any positive integer ending up in that cycle a $u$-\textit{integer}.

\vs
 
\begin{theorem}\label{main} Let $f$ be a digit map. If $f$ has a modular obstruction, then for any cycle number $u$, there do not exist two consecutive $u$-integers. If $f$ does not have a modular obstruction, then for any cycle number $u$ and any positive integer $n$, there exist $n$ consecutive $u$-integers. 
\end{theorem}

\vs

For example, working in base $10$, if we construct the digit map $f_*: \{0,\dots,9\} \to \Z^{\ge 0}$ by setting $f_*(0)=0,f_*(1) = 1,$ and $f_*(9) = 7$, and choosing \textit{any} values for $2,3,4,5,6,7$, and $8$, we are guaranteed that there will exist arbitrarily long sequences of consecutive positive integers that end up at $1$ under repeated application of $f$. The result of Theorem \ref{main} consumes the work of H. Pan [4], H. Grundman and E. A. Teeple in [2], and E. El-Sedy and S. Siksek in [1].

\vs

\section{Proof of Theorem \ref{main}}

We first quickly prove the first part of Theorem \ref{main}. Suppose that $f$ has a modular obstruction: there is some $g > 1$ with $g \mid b-1$ and $f_*(m) \equiv m \pmod{g}$ for each $1 \le m \le b-1$. Then, for any $n \in \N$, it holds that $f(n) \equiv n \pmod{g}$; indeed, if $n = \sum_{j=0}^k a_jb^j$, then, since $b \equiv 1 \pmod{g}$, $$f(n) \equiv \sum_{j=0}^k f_*(a_j) \equiv \sum_{j=0}^k a_j \equiv \sum_{j=0}^k a_j b^j \pmod{g}.$$ Therefore, for any $n \in \mathbb{N}$ and $r \ge 1$, the iterate $f^r(n)$ is congruent to $n$ mod $g$. Consequently, if there were a cycle number $u$ and corresponding $n,r_1,r_2 \ge 1$ with $f^{r_1}(n) = f^{r_2}(n+1) = u$, we'd have $n \equiv n+1 \pmod{g}$, absurd. 

\vs

We now move on to the second part of Theorem \ref{main}. We first use a few short results of Pan and introduce new techniques and results in Lemma 3 and Corollary 2.2. Specifically, the proofs of Lemma 1, Corollary 2.1, and Lemma 2 are basically identical to the proofs given by Pan; we just fit them to our notation.

\vs

\begin{lemma} Let $x$ and $m$ be arbitrary positive integers. Then for each $r \ge 1$, there exists a positive integer $l$ such that $$f^r(l+y) = f^r(l)+f^r(y) = x+f^r(y)$$ for each $1 \le y \le m$.
\end{lemma}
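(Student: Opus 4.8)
The plan is to induct on $r$, exploiting the elementary fact that a digit map is additive under addition without carries: if $b^N \mid l$ and $0 \le y < b^N$, then the base-$b$ representation of $l+y$ is simply the representation of $y$ placed in the bottom $N$ digit positions and that of $l/b^N$ above it, so no carries occur and $f(l+y) = f(l) + f(y)$. The other tool is a way to realize a prescribed value as $f$ of a number with many trailing zeros: if $R_k$ denotes the base-$b$ repunit of length $k$ (all $k$ digits equal to $1$), then $f(R_k \cdot b^N) = k \cdot f(1) = k$ while $b^N \mid R_k b^N$.

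For the base case $r=1$, pick $N$ with $b^N > m$ and set $l = R_x \cdot b^N$, so that $f(l) = x$ and $b^N \mid l$. For $1 \le y \le m < b^N$ the addition $l+y$ is carry-free, hence $f(l+y) = f(l) + f(y) = x + f(y)$, which is the claim for $r=1$ (the middle expression $f(l) = x$ being recorded along the way).

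For the inductive step, assume the statement for $r$ and deduce it for $r+1$. Let $m' = \max_{1 \le y \le m} f(y)$; note $m' \ge f(1) = 1$. Apply the induction hypothesis with parameters $x$ and $m'$ to obtain $l_1$ with $f^r(l_1 + z) = x + f^r(z)$ for all $1 \le z \le m'$, so in particular $f^r(l_1) = x$. Now choose $N$ with $b^N > m$ and set $l = R_{l_1} \cdot b^N$, so that $f(l) = l_1$ and $b^N \mid l$. For $1 \le y \le m$ the addition $l+y$ is carry-free, giving $f(l+y) = f(l) + f(y) = l_1 + f(y)$ and therefore $f^{r+1}(l+y) = f^r\big(l_1 + f(y)\big)$. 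If $f(y) \ge 1$, then $1 \le f(y) \le m'$ and the induction hypothesis yields $f^r(l_1 + f(y)) = x + f^r(f(y)) = x + f^{r+1}(y)$; if $f(y) = 0$, then $f^r(l_1 + f(y)) = f^r(l_1) = x = x + f^{r+1}(y)$ since $f^{r+1}(y) = f^r(0) = 0$. In either case $f^{r+1}(l+y) = x + f^{r+1}(y)$, and the middle equality holds because $f^{r+1}(l) = f^r(f(l)) = f^r(l_1) = x$, completing the induction.

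The one point requiring a little care is that $f_*$ may send some digit to $0$, so $f(y)$ can vanish and the induction hypothesis, stated for $z \ge 1$, does not literally apply to $z = f(y)$; this is exactly the degenerate case handled above, where both sides collapse to $x$. Aside from that bookkeeping, every step is the routine observation that juxtaposing digit blocks adds the corresponding $f$-values, so I expect no serious obstacle.
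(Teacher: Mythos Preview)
Your proof is correct. The paper does not actually supply a proof of this lemma; it says the argument is ``nearly identical to those appearing in [1]'' (Pan's paper on consecutive happy numbers) and omits it. Your induction on $r$, built on the two observations that (i) $f$ is additive when the summands occupy disjoint digit blocks and (ii) a shifted repunit $R_k\cdot b^N$ has $f$-value exactly $k$ since $f_*(1)=1$, is precisely the standard approach and almost certainly what Pan does. The bookkeeping for the degenerate case $f(y)=0$ is handled cleanly (using $f(0)=0$), and your note that $m'\ge f(1)=1$ ensures the inductive call is legitimate.
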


\begin{proof}
We use induction on $r$. When $r = 1$, choose a positive integer $s$ such that $b^s > m$ and let $$l_1 = \sum_{j=0}^{x-1}b^{s+j}.$$ Clearly for any $1 \le y \le m$, $$f(l_1+y) = f(l_1)+f(y) = x+f(y).$$ Now assume $r > 1$ and the assertion of Lemma 1 holds for the smaller values of $r$. Note there exists an $m'$ such that $f(y) \le m'$ for $1 \le y \le m$. Therefore, by induction hypothesis, there exists an $l_{r-1}$ such that $$f^{r-1}(l_{r-1}+f(y)) = f^{r-1}(l_{r-1})+f^{r-1}(f(y)) = x+f^r(y)$$ for $1 \le y \le m$. Let $$l_r = \sum_{j=0}^{l_{r-1}-1} b^{s+j}$$ where $s$ satisfies $b^s > m$. Then, $$f^r(l_r) = f^{r-1}(f(l_r)) = f^{r-1}(l_{r-1}) = x$$ and for each $1 \le y \le m$,

\begin{align*}
f^r(l_r+y) &= f^{r-1}(f(l_r+y)) = f^{r-1}(f(l_r)+f(y)) \\
&= f^{r-1}(l_{r-1}+f(y)) = f^{r-1}(l_{r-1})+f^r(y) = f^r(l_r)+f^r(y).
\end{align*}
\end{proof}

\begin{definition} Let $D = D(f_*,b)$ be the set of all positive integers that are in some cycle, that is $u \in D$ if and only if $f^r(u) = u$ for some $r \ge 1$. It is easy to see that $D$ is finite. 
\end{definition}

\begin{definition} Take some $u \in D$. We say a positive integer $n$ is a \textit{$u$-integer} if $f^r(n) = u$ for some $r \ge 1$. We say two positive integers $m,n$ are \textit{concurrently $u$-integers} if for some $r \ge 1$, $f^r(m) = f^r(n) = u$. 
\end{definition}

Note that two $u$-integers $m,n$ are not concurrently $u$-integers only if $u$ belongs to a cycle of length greater than $1$ in $D$ and $m,n$ are at different places in the cycle at a certain time. Note ``concurrently $u$-integers'' is a transitive relation. Now fix $u$ and we will prove that there are arbitrarily long sequences of consecutive $u$-integers. First, we make a reduction.

\vs

\begin{corollary} Assume that there exists $h \in \N$ such that $h+x$ is a $u$-integer for all $x \in D$. Then for arbitrary $m \in \N$, there exists $l \in \N$ such that $l+1,l+2,\dots, l+m$ are $u$-integers. 
\end{corollary}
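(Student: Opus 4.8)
The plan is to feed the hypothesis of the corollary straight into Lemma 2.1. First I would fix $m \in \Z^+$ and recall that every positive integer eventually enters a cycle, i.e.\ for each $y$ there is an exponent past which every iterate of $y$ lies in $D$. Since the set $\{1, 2, \dots, m\}$ is finite and $D$ is absorbing under $f$ (once an iterate lands in $D$ it stays there), there is a single $R \ge 1$ with $f^R(y) \in D$ for all $1 \le y \le m$.

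Now I would apply Lemma 2.1 with $x := h$ (the positive integer furnished by the hypothesis), with the same $m$, and with $r := R$. This yields a positive integer $l$ such that $f^R(l+y) = f^R(l) + f^R(y) = h + f^R(y)$ for every $1 \le y \le m$. For each such $y$ we have $f^R(y) \in D$, so $h + f^R(y)$ is a number of the form $h + x$ with $x \in D$, and hence is a $u$-integer by hypothesis; that is, there is $s_y \ge 1$ with $f^{s_y}\bigl(h + f^R(y)\bigr) = u$. Composing the iterates, $f^{R + s_y}(l + y) = u$ with $R + s_y \ge 1$, so each of $l+1, l+2, \dots, l+m$ is a $u$-integer, which is exactly the conclusion.

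I do not expect a genuine obstacle at this step: all the real work is already contained in Lemma 2.1 and, separately, in establishing the hypothesis that $h + x$ is a $u$-integer for every $x \in D$ (presumably the role of Lemma 2.3). The only points needing a line of care are the uniformity of $R$ over the finitely many $y \le m$, the forward-invariance of $D$ under $f$, and keeping all exponents at least $1$ so that the literal definition of $u$-integer is met.
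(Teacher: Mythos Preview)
Your argument is correct and is exactly the intended one: the paper omits the proof of this corollary as ``nearly identical'' to Pan's, and the standard route is precisely what you wrote --- choose $R$ so that $f^R(y)\in D$ for all $1\le y\le m$, apply Lemma~2.1 with $x=h$ and $r=R$, and then use the hypothesis on $h+D$. There is nothing to add.
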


\begin{proof}
By the definition of $D$, there exists $r \in \N$ such that $f^r(y) \in D$ for all $1 \le y \le m$. By Lemma 1, there exists $l \in \N$ so that $$f^r(l+y) = h+f^r(y)$$ for $1 \le y \le m$. Since $f^r(l+y)$ is then a $u$-integer, $l+y$ is as well, for $1 \le y \le m$.
\end{proof}

\vs

\begin{lemma} Assume that for each $x \in D$ there exists $h_x \in \N$ such that $h_x+u$ and $h_x+x$ are concurrently $u$-integers. Then there exists $h \in \N$ such that $h+x$ is a $u$-integer for each $x \in D$.
\end{lemma}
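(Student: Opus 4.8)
The plan is to prove, by strong induction on $|S|$, the following statement, which contains the lemma: \emph{for every nonempty $S \subseteq D$ with $u \in S$ there is an $h \in \Z^+$ such that $h+x$ is a $u$-integer for every $x \in S$.} Since $u \in D$ and the hypothesis of the lemma supplies an $h_x$ for each $x \in D$ (hence for each $x$ in any subset of $D$), the case $S = D$ is exactly the lemma. The base case $|S| = 1$ is immediate: then $S = \{u\}$, and the hypothesis applied to $x = u$ says $h_u + u$ and $h_u + u$ are concurrently $u$-integers, i.e.\ $h_u + u$ is itself a $u$-integer, so $h = h_u$ works.

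For the inductive step suppose $|S| \ge 2$ and the statement holds for all smaller nonempty $u$-containing subsets of $D$. Pick $x_1 \in S \setminus \{u\}$, and let $r$ witness that $h_{x_1} + u$ and $h_{x_1} + x_1$ are concurrently $u$-integers, so that $f^r(h_{x_1} + u) = f^r(h_{x_1} + x_1) = u$; this is precisely where ``concurrently'' is used, forcing these two orbits to the same value $u$ at the same time $r$. Let $L$ be the least common multiple of the lengths of the cycles in $D$, so that $f^L$ fixes $D$ pointwise; then $f^{r + jL}(h_{x_1} + u) = f^{r+jL}(h_{x_1} + x_1) = u$ for all $j \ge 0$, so we may enlarge $r$ at will. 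In particular, set $m := \max_{x \in S}(h_{x_1} + x)$ and choose $r$ of this form large enough that $f^r(z) \in D$ for every $z \in \{1, \dots, m\}$, which is possible since each orbit reaches $D$ in finitely many steps and $\{1, \dots, m\}$ is finite.

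Now the map $\phi \colon S \to D$, $\phi(x) := f^r(h_{x_1} + x)$, is well defined, and $\phi(u) = \phi(x_1) = u$. Thus $\phi$ is not injective, so $S' := \phi(S)$ is a subset of $D$ with $u \in S'$ and $|S'| \le |S| - 1$. This size drop is the crux of the argument, and it is the reason the statement must be phrased for arbitrary $u$-containing subsets of $D$: since $S' \subseteq D$, the lemma's hypothesis again provides the needed $h_{x'}$ for each $x' \in S'$, so the inductive hypothesis applies to $S'$ and yields $c \in \Z^+$ with $c + x'$ a $u$-integer for every $x' \in S'$. Finally apply Lemma 2.1 with the constant $c$, with this $m$, and with the exponent $r$, obtaining $l \in \Z^+$ with $f^r(l + y) = c + f^r(y)$ for all $1 \le y \le m$. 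Put $h := l + h_{x_1}$. For each $x \in S$ we have $h_{x_1} + x \le m$, hence $f^r(h + x) = f^r(l + (h_{x_1} + x)) = c + f^r(h_{x_1} + x) = c + \phi(x)$, which is a $u$-integer because $\phi(x) \in S'$; therefore $h + x$ is a $u$-integer. This completes the induction, and $S = D$ gives the lemma.

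I expect the main obstacle to be conceptual, not computational: one has to realize both that the induction should run over all $u$-containing subsets of $D$ (so that the collapsed image $S'$ is itself a valid instance, fed again by the lemma's hypothesis) and that the ``concurrently'' clause is exactly what makes $\phi$ identify the distinct elements $u$ and $x_1$, forcing $|S'| < |S|$. Once that is seen, the remaining points --- that $r$ may be enlarged by multiples of $L$ without spoiling $f^r(h_{x_1}+u) = f^r(h_{x_1}+x_1) = u$, that a large $r$ has already carried $\{1,\dots,m\}$ into $D$, and that Lemma 2.1 transplants the configuration with the desired constant shift $c$ --- are routine.
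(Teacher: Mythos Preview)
Your proof is correct and follows essentially the same approach as the paper's: induct over $u$-containing subsets of $D$, use the ``concurrently'' hypothesis to collapse two elements of $S$ to the single point $u$ under $f^r$, apply the inductive hypothesis to the smaller image set, and then transplant via Lemma~2.1 with $h = l + h_{x_1}$. Your treatment is in fact more careful than the paper's in one place: you explicitly justify, via the LCM of the cycle lengths, why $r$ can be enlarged so that the concurrence $f^r(h_{x_1}+u)=f^r(h_{x_1}+x_1)=u$ is preserved while simultaneously guaranteeing $f^r(h_{x_1}+x)\in D$ for all $x\in S$, whereas the paper simply asserts that such an $r$ exists.
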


\begin{proof}
We shall prove that, under the assumption of Lemma 2, for each subset $X$ of $D$ containing $u$, there exists $h_X \in \N$ such that $h_X+x$ is a $u$-integer for each $x \in X$.

\vs

The cases $|X| = 1$ and $|X| = 2$ are clear. Assume $|X| > 2$ and that the assertion holds for every smaller value of $|X|$. Take some $x \in X$, with $x \not = u$. Then $h_x+u$ and $h_x+x$ are concurrently $u$-integers, so take $r \in \N$ large enough so that $f^r(h_x+u) = f^r(h_x+x) = u$ and $f^r(h_x+y) \in D$ for all $y \in X$. Let $X^* = \{f^r(h_x+y) | y \in X\}$. Then, $X^*$ is clearly a subset of $D$ containing $u$ with $|X^*| < |X|$. Therefore, by induction, there exists $h_{X^*} \in \N$ such that $h_{X^*}+f^r(h_x+y)$ is a $u$-integer for each $y \in X$. By Lemma 1, there exists $l \in \N$ satisfying $$f^r(l+h_x+y) = h_{X^*}+f^r(h_x+y)$$ for every $y \in X$. Thus, $h_X := l+h_x$ works. The induction is complete.
\end{proof}

\vs

We now proceed to prove the hypothesis of Lemma 2. Note it suffices to show that for any fixed difference $d$, we can find two concurrent $u$-integers with difference $d$. This is the statement of Corollary 2.2. We first need one more lemma.

\vs

\begin{lemma} Let $h$ be a $u$-integer. Then for every integer $a$, there exists a $u$-integer $l$ such that $l \equiv a \pmod{f(b-1)}$, and such that $l$ and $h$ are concurrently $u$-integers.
\end{lemma}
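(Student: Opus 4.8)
The plan is to build $l$ as a positive integer whose weighted digit sum $f(l)$ is a conveniently chosen large $u$-integer, so that $l$ lands in $u$'s cycle after exactly as many iterations of $f$ as $h$ needs — which forces the ``concurrently $u$-integers'' condition for free — while the digits of $l$ are then chosen to pin down its residue modulo $M := f(b-1)$. I will write $e := \gcd(b-1, M)$, and use that $\gcd(b,M) = 1$ together with the standing hypothesis $\gcd(f(m_*) - m_*, M) = 1$, which makes $f(m_*) - m_*$ invertible modulo $e$. First I would fix $r_0$ with $f^{r_0}(h) = u$; since $f^p(u) = u$ for the cycle length $p$, replacing $r_0$ by $r_0 + p$ lets me assume $r_0 \ge 2$.

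The construction rests on two claims. First, for every $s \ge 1$ and every bound $B$ there is a $u$-integer $w \ge B$ with $f^s(w) = u$: the cycle through $u$ (which $f$ permutes cyclically) contains an element $c$ with $f^{s-1}(c) = u$ — here $c = u$ when $s = 1$ — and then for $K$ large the integer $b^K\,\frac{b^c - 1}{b - 1}$, whose base-$b$ digits are $1$ in $c$ consecutive positions and $0$ elsewhere, has weighted digit sum $c$, hence $f^s$-value $u$, and exceeds $B$. I apply this with $s = r_0 - 1 \ge 1$ to get $w$. Second, for every sufficiently large $w$ and every integer $a$ there is a positive integer $l$ with $f(l) = w$ and $l \equiv a \pmod M$: using only the digits $0$, $1$, and $m_*$, I place $\ell$ copies of the digit $m_*$ and $k := w - f(m_*)\ell$ copies of the digit $1$ at pairwise distinct positions still to be chosen, so that $f(l) = k + f(m_*)\ell = w$ regardless of the positions; varying the positions of the $1$'s makes $l \bmod M$ sweep out the full coset $k + \langle b - 1\rangle$ (this is where $\gcd(b,M) = 1$ enters, so the residues $b^p \bmod M$ generate $\langle b-1\rangle$ once $k$ is large), and the one residual constraint — which works out to $(f(m_*) - m_*)\,\ell \equiv w - a \pmod e$ — is solved by a suitable $\ell \in \{0, 1, \dots, e - 1\}$ exactly because $f(m_*) - m_*$ is a unit mod $e$. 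Combining the two claims: with $w$ and $l$ as above, $f^{r_0}(l) = f^{r_0 - 1}(f(l)) = f^{r_0 - 1}(w) = u$, so $l$ is a $u$-integer; since also $f^{r_0}(h) = u$, the integers $l$ and $h$ are concurrently $u$-integers with common witness $r_0$; and $l \equiv a \pmod M$ by construction.

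The step I expect to require the most care is the bookkeeping inside the second claim: one must verify that confining the digits to $\{0, 1, m_*\}$ and varying their positions still attains \emph{every} residue class modulo the full modulus $f(b-1)$, whereas the digit $1$ alone reaches only the index-$e$ subgroup $\langle b - 1\rangle$. Bridging that gap is precisely the purpose of the hypothesis $\gcd(f(m_*) - m_*, f(b-1)) = 1$, via solvability of the displayed congruence for $\ell$; the rest of the argument is routine. (This approach does not need Lemma 2.1 — one could instead produce a $u$-integer with prescribed $f$-value using Lemma 2.1 in place of the explicit string of $1$'s — but the direct construction seems more transparent here.)
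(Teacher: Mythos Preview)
Your argument is correct and shares the paper's central construction --- building $l$ from copies of the digits $1$ and $m_*$ so as to control both $f(l)$ and the residue $l \bmod f(b-1)$ --- but the execution differs in two places. For concurrence, you fix $r_0$ with $f^{r_0}(h)=u$ up front and then engineer $f^{r_0}(l)=u$ directly via a cycle element $c$ with $f^{r_0-1}(c)=u$; the paper instead produces an infinite chain $l_1,l_2,\dots$ with $f(l_{n+1})=l_n$ and $l_n\equiv a$, and appeals to finiteness of the cycle to find one whose phase matches $h$. Your route is cleaner here. For the residue, however, you introduce $e=\gcd(b-1,M)$, vary the positions of the $1$'s to sweep a coset of $\langle b-1\rangle$, and then solve $(f(m_*)-m_*)\ell\equiv w-a\pmod e$ for the number of $m_*$'s; the paper sidesteps all of this by simply placing \emph{every} digit at a position $j$ with $b^j\equiv 1\pmod{f(b-1)}$ (possible since $\gcd(b,f(b-1))=1$), so that $l\equiv(\text{number of }1\text{'s})+m_*\cdot(\text{number of }m_*\text{'s})$ exactly, and one solves a single congruence modulo $f(b-1)$ rather than modulo $e$. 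That trick would let you drop $e$ and the coset analysis entirely. One small imprecision in your write-up: the coset you sweep is not $k+\langle b-1\rangle$ but $m_*\ell+k+\langle b-1\rangle$ (once the $m_*$'s are pinned to positions with $b^j\equiv 1$), though your displayed congruence for $\ell$ already accounts for this correctly.
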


\begin{proof}
Let $l_1$ be a $u$-integer such that $$l_1 > f(a)+(b-1)f(b-1)\max_{1 \le m \le b-1} f(m).$$ We now find some $l_2$ with $f(l_2) = l_1$ and $l_2 \equiv a \pmod{f(b-1)}$. Since $$\gcd(f(1)-1,\dots,f(b-1)-(b-1),f(b-1)) = 1,$$ we may take $r_1,\dots,r_{b-1} \in \{0,\dots,f(b-1)\}$ so that $$r_1(1-f(1))+\dots+r_{b-1}(b-1-f(b-1)) \equiv f(a)-l_1 \pmod{f(b-1)}.$$ Note that $$L := l_1-f(a)-r_1f(1)-\dots-r_{b-1}f(b-1)$$ satisfies $L \ge 1$. By the pigeonhole principle, there is some $b' \in \{0,\dots,b-1\}$ such that $b^j \equiv b' \pmod{f(b-1)}$ for infinitely many $j$. Let $j_1 < j_2 < \dots < j_L < t_1^{(1)} < \dots < t_{r_1}^{(1)} < \dots < t_1^{(b-1)} < \dots < t_{r_{b-1}}^{(b-1)}$ satisfy $b^{j_i} \equiv b^{t_s^{(k)}} \equiv b' \pmod{f(b-1)}$ for each $i,s$, and $k$, and satisfy $b^{j_1} > a$. 

Let $$l_2 = a+\sum_{n=1}^L b^{j_n}+\sum_{m=1}^{b-1}\sum_{j=1}^{r_m} mb^{t_j^{(m)}}.$$ Due to the inequality $b^{j_1} > a$, we have $$f(l_2) = f(a)+L+r_1f(1)+\dots+r_{b-1}f(b-1) = l_1,$$ and due to the choice of $r_i$'s, we have $$l_2 \equiv a+b'\left[L+r_1+2r_2+\dots+(b-1)r_{b-1}\right] \equiv a \pmod{f(b-1)}.$$

Now we generate $l_3,l_4,\dots$ inductively by choosing $l_{n+1}$ so that $l_{n+1} \equiv a \pmod{f(b-1)}$ and $f(l_{n+1}) = l_n$. Note that since the cycle that $u$ is in is finite, it must be that one of the $l_n$'s is concurrently a $u$-integer with $h$. \end{proof}

\vs

\begin{corollary} For each $x \in \N$, there is a $u$-integer $l$ such that $l$ and $l+x$ are concurrently $u$-integers.
\end{corollary}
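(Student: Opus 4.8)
The plan is to derive the corollary from Lemmas 2.1 and 2.3. I would begin with a reduction supplied by Lemma 2.1: it suffices to produce positive integers $y_1 < y_2$ with $y_2 - y_1 = x$ that share a common iterate, i.e. $f^{\rho}(y_1) = f^{\rho}(y_2)$ for some $\rho \ge 1$. Granting such a pair, let $z$ be the common value and choose any $u$-integer $U^*$ with $U^* > z$ (there are infinitely many $u$-integers, e.g. each $b^N u$, since $f(b^N u) = f(u) \in D$). Lemma 2.1 with target value $U^* - z$, range $m = y_2$, and iteration count $\rho$ gives $l_0$ with $f^{\rho}(l_0 + y_i) = (U^* - z) + f^{\rho}(y_i) = U^*$ for $i = 1, 2$; then $l := l_0 + y_1$ and $l + x = l_0 + y_2$ both reach the $u$-integer $U^*$ after $\rho$ steps, so they are concurrently $u$-integers, and $l$ is a $u$-integer.

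Next I would reduce the offset to an element of $D$. Choosing $\rho_0$ so large that $w := f^{\rho_0}(x) \in D \cup \{0\}$, the spreading identity $f^{\rho_0}(l + x) = f^{\rho_0}(l) + w$ from Lemma 2.1 lets me transport a merging pair at distance $w$ back to one at distance $x$ (take the target in Lemma 2.1 to be the smaller member of the $w$-pair); the case $w = 0$ is immediate. For $w \in D$ the idea is to produce two positive integers at distance exactly $w$ with a common iterate by adapting the digit bookkeeping from the proof of Lemma 2.3: that lemma already places mutually concurrent $u$-integers in every residue class modulo $f(b-1)$, hence furnishes a pair whose difference is $\equiv w \pmod{f(b-1)}$, and one then tries to correct that difference to exactly $w$ using value-altering but $f$-value-preserving moves — multiplication by powers of $b$, and the exchange of a digit $m_*$ for $f(m_*)$ copies of the digit $1$ (legitimate since $\gcd(f(m_*) - m_*, f(b-1)) = 1$) — combined so as to leave the two integers synchronized while landing on the exact value $w$.

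I expect the crux to be precisely this last correction. Lemma 2.3 controls membership in the set of $u$-integers only modulo $f(b-1)$, whereas the corollary demands a pair at the \emph{exact} distance $x$ — equivalently, it demands disposing of offsets $w \in D$ — so the whole argument rests on promoting a congruence to an equality while keeping not just that both integers reach $u$, but that they reach it at the same time. This is where the hypotheses $\gcd(f(m_*) - m_*, f(b-1)) = 1$ and $\gcd(b, f(b-1)) = 1$ enter in an essential, quantitative way, and reconciling "correct value" with "correct phase" is the delicate heart of the proof.
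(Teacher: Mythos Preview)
Your reductions via Lemma 2.1 are correct, but they do not touch the real content of the corollary: after trading $x$ for $w=f^{\rho_0}(x)\in D$ you still face exactly the same problem, producing a pair at a \emph{prescribed exact distance} that eventually merges. You flag this yourself as ``the delicate heart of the proof,'' and the mechanism you sketch --- apply $f$-preserving moves (multiply by $b$; swap a digit $m_*$ for $f(m_*)$ ones) to two concurrent $u$-integers whose difference is only $\equiv w\pmod{f(b-1)}$, hoping to land on difference exactly $w$ --- does not work as stated. Each such move alters the value of a single integer by an uncontrolled amount (a factor of $b$, or $\sum_i b^{j_i}-m_*b^j$), and nothing in your outline steers the \emph{difference} of two independently modified numbers onto a specified target. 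Lemma 2.3 controls only residues; its proof contains no device for promoting a residue to an equality, so ``adapting the digit bookkeeping'' is not yet an argument.

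The paper supplies the missing idea, and it is of a different kind. Rather than correcting modulo $f(b-1)$, it writes down $l$ by an explicit digit recipe so that a \emph{single} application of $f$ already sends $l$ and $l+x$ to two pre-selected concurrently-$u$ integers $h$ and $N$. The device is a carrying cascade: with $x_1=b^s-x$ and $b^s>x$, the number $l$ carries $x_1$ in its low positions, then a block $\sum_{j=0}^{k-1}(b-1)b^{s+j}$ just above it, then $N-1$ high-order $1$'s. Adding $x$ turns $x_1$ into $b^s$, which ripples through the entire $(b-1)$-block and collapses the bottom of $l+x$ to a single $1$ at position $s+k$; hence $f(l+x)=1+(N-1)=N$. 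Meanwhile $f(l)=f(x_1)+(N-1)+kf(b-1)$, and $k$ is chosen --- using only the residue information Lemma 2.3 provides --- so that this equals $h$. This carry trick, converting the exact offset $x$ into a controllable multiple of $f(b-1)$ in one step, is the substance of the corollary and is absent from your proposal.
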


\begin{proof} Fix $x \in \N$. Take $s \in \N$ such that $b^s > x$. Let $x_1 = b^s-x$. Take a $u$-integer $h'$ such that $$h' \equiv f(x_1) \pmod{f(b-1)}.$$ Let $V$ be the cycle set that $u$ is in. By Lemma 3, for each $v' \in V$, there exists $l_{v'}$ such that $l_{v'} \equiv 1 \pmod{f(b-1)}$, and $l_{v'}$ and $v'$ are concurrently $u$-integers. Fixing an $l_{v'}$ for each $v' \in V$, let $M = \max_{v' \in V} l_{v'}$.

\vs

Since the proof of Lemma 3 guarantees infinitely many $u$-integers in a given residue, we may (and do) fix $h > f(x_1)+M$ to be a $u$-integer with $h \equiv f(x_1) \pmod{f(b-1)}$. Let $v$ be in the cycle of $u$ so that $h$ and $v$ are concurrently $u$-integers. Now take the $u$-integer $N = l_{v}$ so that $N \equiv 1\pmod{f(b-1)}$, and $N$ and $v$ are concurrently $u$-integers. Take a positive integer $t$ so that $b^t > b^{s+\lfloor \frac{h}{f(b-1)}\rfloor+1}$. Let $x_2 = x_1+b^t\sum_{j=1}^{N-1} b^j$. Note $f(x_2) = f(x_1)+(N-1)$ since $b^t > b^s > x_1$. Thus, $$f(x_2) \equiv f(x_1) \equiv h \pmod{f(b-1)}.$$ Also, $f(x_2) = f(x_1)+(N-1) \le f(x_1)+M-1 < h$. Write $h = f(b-1)k+f(x_2)$ and note that we have $k > 0$. Also note $k \le \lfloor \frac{h}{f(b-1)} \rfloor + 1 < t-s$. Let $$l = x_2+\sum_{j=0}^{k-1}(b-1)b^{s+j}.$$

Then, 

\begin{align*}
f(l) &= f\left(x_1+b^t\sum_{j=1}^{N-1} b^j + b^s\sum_{j=0}^{k-1}(b-1)b^j\right) \\
&= f\left(x_1+b^s[b^{t-s}\sum_{j=1}^{N-1} b^j + \sum_{j=0}^{k-1} (b-1)b^j]\right) \\
&= f(x_1)+f(b^{t-s}\sum_{j=1}^{N-1} b^j + \sum_{j=0}^{k-1} (b-1)b^j),
\end{align*}

\noindent and since $\sum_{j=0}^{k-1}(b-1)b^j = b^k-1 < b^{t-s}$, we have $$f(l) = f(x_1)+(N-1)+kf(b-1) = f(x_2)+kf(b-1) = h.$$ Further, $$f(l+x) = f\left(b^s+\sum_{j=0}^{k-1}(b-1)b^{s+j} + b^t\sum_{j=1}^{N-1} b^j\right) = f\left(b^{s+k}+b^t\sum_{j=1}^{N-1} b^j\right),$$ which is equal to $N$. Since $h$ and $N$ are concurrently $u$-integers, it follows that $l$ and $l+x$ are concurrently $u$-integers, as desired.
\end{proof}

\vs

Theorem 1.1 now follows from Corollary 2.1, Lemma 2, and Corollary 2.2.

\vs

\section{Acknowledgments}

I would like to thank Serin Hong for reading this carefully and for providing great feedback. I also thank Akash Parikh, Omer Tamuz, and an anonymous referee for providing insightful comments.

\end{document}